\numberwithin{equation}{section}
\theoremstyle{plain}
\newtheorem{Th}{Theorem}[section]
\newtheorem{Lemma}[Th]{Lemma}
 \theoremstyle{definition}
\newtheorem{?}[Th]{Problem}
\newcommand{\ab}[1]{\left\vert{#1}\right\vert}
\newcommand{\zj}[1]{\left({#1}\right)}
\newcommand{\Z}{\ensuremath{\mathbb{Z}}}
     \newcommand\pg{\overset{G}{+}}
\def \< {\langle}
\def \> {\rangle}
\begin{document}

\title[Superadditivity and submultiplicativity for sumsets]{A superadditivity and submultiplicativity property for cardinalities of
sumsets}

\author{Katalin Gyarmati}
\address{Alfr\'ed R\'enyi Institute of Mathematics\\
     Budapest, Pf. 127\\
     H-1364 Hungary
}
\email{gykati@cs.elte.hu}
\thanks{Supported by Hungarian National Foundation for Scientific Research
(OTKA), Grants No. T 43631, T 43623, T 49693. }

\author{M\'at\'e Matolcsi}
\address{Alfr\'ed R\'enyi Institute of Mathematics\\
     Budapest, Pf. 127\\
     H-1364 Hungary
} \email{matomate@renyi.hu}
\thanks{Supported by Hungarian National Foundation for Scientific Research
(OTKA), Grants No. PF-64061, T-049301, T-047276}

\author{Imre Z. Ruzsa}
\address{Alfr\'ed R\'enyi Institute of Mathematics\\
     Budapest, Pf. 127\\
     H-1364 Hungary
}
\email{ruzsa@renyi.hu}
\email{{\rm To all authors: } triola@renyi.hu}
\thanks{Supported by Hungarian National Foundation for Scientific Research
(OTKA), Grants No. T 43623, T 42750, K 61908.}

 \subjclass{11B50, 11B75, 11P70}
    \begin{abstract}
     For finite sets of integers $A_1, A_2 \dots A_n$  we study the cardinality of the $n$-fold sumset $A_1+\dots
     +A_n$ compared to those of $n-1$-fold sumsets $A_1+\dots
     +A_{i-1}+A_{i+1}+\dots A_n$. We prove a superadditivity and a
     submultiplicativity property for these quantities. We also
     examine the case when the addition of elements is restricted
     to an addition graph between the sets.
    \end{abstract}

     \maketitle

     \section{Introduction}

Let $A_1, A_2, \dots A_n$ be finite sets of integers. How does the
cardinality of the $n$-fold sumset $A_1+ A_2 +\dots +A_n$ compare
to the cardinalities of the $n-1$-fold sums $A_1+\dots
     +A_{i-1}+A_{i+1}+\dots A_n$?

In the special case when all the sets are the same, $A_i=A\subset
\Z$, Vsevolod Lev \cite{lev96} observed that the quantity $|kA|-1 \over k$
     is increasing (where we have used the standard notation for the $k$-fold sum $A+A+\dots +A=kA$). The first cases of this result assert that
     \begin{equation} \label{12}
     |2A| \geq  2|A| -1
     \end{equation}
     and
     \begin{equation} \label{23}
     |3A| \geq  {3\over 2}|2A| - {1\over 2}.
     \end{equation}
     Inequality \eqref  {12} can be extended to different summands as
     \begin{equation} \label{2d}
     |A+B| \geq  |A| +|B| -1,
     \end{equation}
     and this inequality also holds for sets of residues modulo a prime
$p$, the only obstruction being that a cardinality cannot exceed
$p$, i.e.
     \begin{equation} \label{2p}
     |A+B| \geq  \min (|A| +|B| -1, p) ;
     \end{equation}
     this familiar result is known as the Cauchy-Davenport inequality.

     The third author asked whether inequality \eqref  {23} can also be extended
to different summands in the following form:
       \begin{equation} \label{harom} |A+B+C| \geq  { |A+B| + |B+C| + |A+C| -1 \over 2} . \end{equation}
     Lev noticed (personal communication) that this is true in the case when
the sets have the same diameter. (The diameter of a set is the
difference of its maximum and minimum.) In this paper we establish
this property in general, for an arbitrary number of summands, and
with the extra twist that in the $n$-fold sumset it is sufficient
to use the smallest or largest element of at least one of the
summands.

     \begin{Th}  \label{superadd}
     Let $A_1, \dots , A_k$ be finite, nonempty sets of integers. Let $A_i'$ be the two-
(or possibly one-) element set containing the smallest and largest
elements of $A_i$. Put
     \[   S = A_1 + \dots  + A_k ,  \]
     \[   S_i = A_1 + \dots  + A_{i-1} + A_{i+1}+ \dots  + A_k ,  \]
     \[   S_i' = A_1 + \dots  + A_{i-1} + A_i' + A_{i+1}+ \dots  + A_k , \]
     \[   S' = \bigcup _{i=1}^k S_i' .  \]
     We have
     \begin{equation} \label{also}
     \left| S \right| \geq  \left|S' \right| \geq  {1\over k-1} \sum _{i=1}^k \left|S_i \right|
- {1\over k-1}.
     \end{equation}
     \end{Th}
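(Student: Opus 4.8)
\emph{Proof plan.} The left inequality $|S|\ge|S'|$ is immediate: since $A_i'\subseteq A_i$ we get $S_i'\subseteq A_1+\dots+A_k=S$ for every $i$, hence $S'=\bigcup_i S_i'\subseteq S$. For the right inequality, translate each $A_i$ so that $\min A_i=0$ (this changes none of the cardinalities), put $d_i=\max A_i$, $D=d_1+\dots+d_k$, and relabel so that $d_1\le\dots\le d_k$. Then $S_i\subseteq[0,D-d_i]$ and $S_i'=S_i\cup(S_i+d_i)$ with $S_i+d_i\subseteq[d_i,D]$; in particular each $S_i$ and each $S_i+d_i$ lies in $S'$. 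Writing $\sum_i|S_i|=\sum_{x\in S'}\#\{i:x\in S_i\}=k|S'|-\sum_{x\in S'}\#\{i:x\notin S_i\}$, the desired bound is equivalent to
\[
\sum_{i=1}^k|S_i|\ \le\ (k-1)\,|S'|+1 ,
\]
and this is the form I would prove, by induction on $k$.

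For $k=2$ one has $S_1=A_2\subseteq[0,d_2]$ and $S_2+d_2=A_1+d_2\subseteq[d_2,D]$, both contained in $S'$, meeting only in the point $d_2$ (the maximum of $A_2$, the minimum of $A_1+d_2$); hence $|S'|\ge|A_1|+|A_2|-1=|S_1|+|S_2|-1$, exactly as in the two-set bound \eqref{2d}. For the inductive step I would merge the two summands of largest diameter: set $B=A_{k-1}+A_k$, so $\min B=0$, $\max B=d_{k-1}+d_k$, and look at the $(k-1)$-summand system $A_1,\dots,A_{k-2},B$. Its full sumset is again $S$, the sums omitting one of $A_1,\dots,A_{k-2}$ are again the corresponding $S_i$, the one omitting $B$ is $U:=A_1+\dots+A_{k-2}$, and $B'=\{0,d_{k-1}+d_k\}$ satisfies $B'\subseteq A_{k-1}'+A_k$ and $B'\subseteq A_{k-1}+A_k'$, which forces the ``$S'$'' of this smaller system to sit inside our $S'$. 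The inductive hypothesis then yields $\sum_{i\le k-2}|S_i|+|U|\le(k-2)|S'|+1$, and comparing this with the target $\sum_{i\le k}|S_i|\le(k-1)|S'|+1$ reduces the whole step to the single inequality
\[
|S_{k-1}|+|S_k|\ \le\ |S'|+|U| .
\]

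To prove this I would argue much as in the case $k=2$. Here $S_{k-1}=U+A_k$ and $S_k=U+A_{k-1}$ both contain $U\subseteq S'$, so after subtracting $|U|$ the claim becomes $|S_{k-1}\setminus U|+|S_k\setminus U|\le|S'\setminus U|$; and one has the four translates $S_{k-1},\,S_{k-1}+d_{k-1},\,S_k,\,S_k+d_k$ all inside $S'$, occupying the nested intervals $[0,v+q]$, $[p,v+q+p]$, $[0,v+p]$, $[q,v+p+q]$ with $v=\max U$ and $p=d_{k-1}\le q=d_k$. The point is to account exactly for how much \emph{new} material the two shifted copies contribute near $0$ and near $D$, relative to the overlap of $S_{k-1}$ and $S_k$. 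This concrete inequality is where I expect the real difficulty to lie: it is not obtainable from generic sumset inequalities, which only ever give lower bounds (and products) for cardinalities of sumsets, not the upper bound on $|U+A_{k-1}|+|U+A_k|$ that is needed — so the argument must genuinely exploit that everything lives on $\mathbb{Z}$ within bounded intervals and must track the overlaps of the translates precisely. That the bookkeeping has to be exact is shown by the extremal case $A_i=\{0,1,\dots,d\}$, where $S'$ is an interval of length $kd$ and \eqref{also} holds with equality throughout, leaving no slack.
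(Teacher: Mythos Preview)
Your reduction is clean, but the inequality you isolate at the end,
\[
|S_{k-1}|+|S_k|\ \le\ |S'|+|U|\qquad (U=A_1+\dots+A_{k-2}),
\]
is \emph{false in general}, so the induction cannot close. Take $k=3$ with
\[
A_1=\{0,2\},\qquad A_2=\{0,1,2,3\},\qquad A_3=\{0,1,2,3,4\},
\]
so that $d_1<d_2<d_3$ and your rule merges $A_2,A_3$. Then $U=A_1$ has $|U|=2$, while $S_2=A_1+A_3=[0,6]$ and $S_3=A_1+A_2=[0,5]$ have sizes $7$ and $6$. One checks $S'=[0,9]$, so $|S'|=10$, and your inequality reads $7+6\le 10+2$, i.e.\ $13\le 12$. (The theorem itself holds here with equality: $|S_1|+|S_2|+|S_3|=8+7+6=21=2\cdot 10+1$.) The same phenomenon persists for larger $k$; e.g.\ $A_1=A_2=\{0,2\}$, $A_3=[0,3]$, $A_4=[0,4]$ gives $|S_3|+|S_4|=17>15=|S'|+|U|$. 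The trouble is structural: when $U$ is sparse but $A_{k-1},A_k$ are long intervals, $|S_{k-1}|+|S_k|$ is roughly $2|S'|$ while $|U|$ stays bounded. In such cases the slack lives entirely in the inductive bound $\sum_{i\le k-2}|S_i|+|U|\le (k-2)|S'|+1$, and your splitting throws it away.

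The paper's proof avoids induction altogether. It lays down $k-1$ copies of the interval $[0,D]$ and, in the $j$-th copy, marks a lower segment of $0+S_{k-j}$ together with an upper segment of $a_{k-j}+S_{k-j-1}$, the cut points chosen so that across consecutive copies the marked elements of each $S_i$ are counted exactly once (except for a single overlap at the very top, accounting for the ``$-1$''). Every marked element visibly lies in some $S_j'$, hence in $S'$, and the total number of marks is $\sum_i|S_i|-1$, giving $(k-1)|S'|\ge\sum_i|S_i|-1$ directly. The point is that all $k$ of the $S_i$'s are interleaved simultaneously across the $k-1$ copies, rather than two of them being peeled off against a single copy of $S'$ as in your reduction.
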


     The possibility to extend inequality \eqref  {23}  to residues modulo a prime $p$ was
investigated in a paper by Gyarmati, Konyagin, Ruzsa \cite{r07e}.
A naive attempt to extend it in the form
     \[   |3A| \geq  \min \left( {3\over 2}|2A| - {1\over 2}, p \right)  \]
     holds only when $\left|A \right|$ is small in comparison to $p$,
and for larger values the relationship between the sizes of $2A$ and $3A$ is
complicated.

     In a sense, Theorem \ref{superadd} means that the cardinality of sumsets grows faster than
linear. On the other hand, we show that it grows slower than
exponential. For identical summands this means that $\left|kA
\right|^{1/k}$ is decreasing. This was conjectured by the third author.
Lev observed that this is a straightforward consequence of a
Pl\"unnecke-type inequality; more details will be given in Section 4.

Here we establish a more general result for different summands.

     \begin{Th}  \label{submult}
     Let $A_1, \dots , A_k$ be finite, nonempty sets in an arbitrary commutative
semigroup. Put
     \[   S = A_1 + \dots  + A_k ,  \]
     \[   S_i = A_1 + \dots  + A_{i-1} + A_{i+1}+ \dots  + A_k .  \]
     We have
     \begin{equation} \label{felso}
     \left| S \right| \leq  \left( \prod _{i=1}^k \left|S_i \right|  \right)^ {1\over k-1}.
     \end{equation}
     \end{Th}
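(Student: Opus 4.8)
The plan is to prove the equivalent statement $|S|^{k-1} \le \prod_{i=1}^k |S_i|$ by induction on $k$. For $k=2$ there is nothing to prove: $|S| = |A_1+A_2| \le |A_1|\,|A_2| = |S_1|\,|S_2|$. For the inductive step, assuming the assertion for $k-1$ summands, I would apply it to the $k-1$ sets $A_1,\dots,A_{k-2},B$ with $B := A_{k-1}+A_k$. Their $(k-1)$-fold sum is again $S$; deleting $A_i$ for $i\le k-2$ returns $S_i$, while deleting $B$ returns $T := A_1+\dots+A_{k-2}$. The induction hypothesis then gives $|S|^{k-2}\le |T|\prod_{i=1}^{k-2}|S_i|$, and multiplying by $|S|$ reduces the whole theorem to the single ``extra summand'' inequality
\[
 |S|\cdot|T|\;\le\;|S_{k-1}|\cdot|S_k|,
 \qquad\text{i.e.}\qquad
 |T+A_{k-1}+A_k|\cdot|T|\;\le\;|T+A_{k-1}|\cdot|T+A_k|,
\]
since $S=T+A_{k-1}+A_k$, $S_{k-1}=T+A_k$, $S_k=T+A_{k-1}$.

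So the crux is an inequality of Plünnecke--Ruzsa type, $|X+P+Q|\cdot|X|\le|X+P|\cdot|X+Q|$. In a commutative \emph{group} this is classical (it follows from Plünnecke's graph inequality, or from Petridis's short argument), so the statement is immediate there. The delicate point, and the step I expect to be the main obstacle, is that this inequality genuinely fails in an arbitrary commutative semigroup: in $(\{0,1\},\max)$ with $X=\{0,1\}$, $P=Q=\{1\}$ the left side is $2$ and the right side is $1$ (and indeed the above induction then breaks, even though the theorem itself still holds in that example). Hence for the full semigroup generality one must either exploit that the set $X=T$ is not arbitrary but a sumset of $k-2$ of the $A_i$, or replace this step by a cancellation-free argument.

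For the latter there is a convenient reduction: the target inequality is homogeneous under Cartesian powers, so it suffices to prove $|S|^{k-1}\le C_k\prod_{i=1}^k|S_i|$ with a constant $C_k$ depending only on $k$; applying such a bound to the sets $A_i\times\dots\times A_i$ ($N$ copies) inside the semigroup $G^N$ multiplies every $\log|\cdot|$ by $N$, and letting $N\to\infty$ removes $C_k$. One is then left to extract a crude, constant-loss estimate, and here I would try to combine the trivial relations $|S|\le|A_i|\,|S_i|$ with a direct combinatorial count of the fibres of the projection maps $\psi_i\colon S\to S_i$ obtained by fixing one representation $s=a_1(s)+\dots+a_k(s)$ for each $s\in S$ (each such fibre injects into $A_i$ via $s\mapsto a_i(s)$). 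Turning that bookkeeping into a bound of the correct shape $|S|^{k-1}\le C_k\prod|S_i|$ is exactly where the work lies, and is the part I expect to require a new idea beyond the group case.
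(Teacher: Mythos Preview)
Your proposal is not a proof: you correctly diagnose that the reduction to $|T+A_{k-1}+A_k|\,|T|\le |T+A_{k-1}|\,|T+A_k|$ fails in a general commutative semigroup (your $(\{0,1\},\max)$ example even arises with $T$ itself a sumset, so the ``$T$ is special'' escape route is closed too), and your tensor-power fallback still needs a constant-loss inequality $|S|^{k-1}\le C_k\prod|S_i|$ that you do not establish. The trivial bounds $|S|\le|A_i|\,|S_i|$ only give $|S|^{k-1}\le\bigl(\prod|A_i|/|S|\bigr)\prod|S_i|$, and $\prod|A_i|/|S|$ is not bounded by any $C_k$, so the bookkeeping you allude to would have to be genuinely new.

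The paper avoids this entire induction/Pl\"unnecke route and gives a direct, cancellation-free argument. For each $s\in S$ one fixes the representation $s=c_{1i_1}+\dots+c_{ki_k}$ whose index vector $(i_1,\dots,i_k)$ is \emph{lexicographically minimal}; this embeds $S$ as a set $B\subset A_1\times\dots\times A_k$ with $|B|=|S|$. The Loomis--Whitney/Shearer projection inequality then yields $|B|^{k-1}\le\prod_j|B_j|$, where $B_j$ is the projection forgetting the $j$th coordinate. The key point, and the place where lexicographic minimality is used, is that the coordinate-sum map $B_j\to S_j$ is injective: if two tuples in $B_j$ had the same sum, one could swap them under the omitted $j$th coordinate and contradict minimality for the larger one. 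Hence $|B_j|\le|S_j|$ and the theorem follows in one stroke.

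It is worth noting that your final paragraph is close to this in spirit: you suggest fixing one representation $s=a_1(s)+\dots+a_k(s)$ and studying the induced maps to $S_i$. What you are missing is (i) choosing that representation lex-minimally rather than arbitrarily, and (ii) feeding the resulting subset of $A_1\times\dots\times A_k$ into the projection inequality instead of trying to control individual fibres. With those two changes your sketch becomes the paper's proof.
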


     For three summands this inequality was established earlier by the third author
(\cite{r07c}, Theorem 5.1). The proof given in \cite{r07c} is different and
works also for noncommutative groups with a proper change in the
formulation. On the other hand, that argument relied on the invertibility of
the operation, so we do not have any result for noncommutative semigroups.
Neither could we  extend that argument
for more than three summands, and hence the following question remains open.

     \begin{?}  \label{nemkommutativ}
     Let $A_1, \dots , A_k$ be finite, nonempty sets in an arbitrary noncommutative
group. Put
     \[   S = A_1 + \dots  + A_k ,  \]
     \[   n_i = \max _{a\in A_i} \left| A_1 + \dots  + A_{i-1} + a+ A_{i+1}+ \dots
+ A_k \right| . \]
     Is it true that
     \begin{equation} \label{nemkom}   \left| S \right| \leq  \left( \prod _{i=1}^k n_i  \right)^ {1\over k-1}  \ ?\end{equation}
     \end{?}

     The superadditivity property clearly does not hold in such a general
setting (as it fails already mod $p$, see \cite{r07e}). However,
it can easily be extended to torsion-free groups (just as
everything that holds for finite sets of integers) with the change
of formulation that ``smallest'' and ``largest'' do not make sense
in such generality.

     \begin{Th}  \label{superadd-tf}
     Let $A_1, \dots , A_k$ be finite, nonempty sets in a torsion-free group $G$,
     \[   S = A_1 + \dots  + A_k ,  \]
     \[   S_i = A_1 + \dots  + A_{i-1} + A_{i+1}+ \dots  + A_k .  \]
      There are subsets
$A_i'\subset A_i$ having at most two elements such that with
     \[   S_i' = A_1 + \dots  + A_{i-1} + A_i' + A_{i+1}+ \dots  + A_k , \]
     \[   S' = \bigcup _{i=1}^k S_i'   \]
     we have
     \begin{equation} \label{also-tf}
     \left| S \right| \geq  \left|S' \right| \geq  {1\over k-1} \sum _{i=1}^k \left|S_i \right|
- {1\over k-1}.
     \end{equation}
     \end{Th}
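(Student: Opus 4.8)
The plan is to deduce Theorem~\ref{superadd-tf} from Theorem~\ref{superadd} by transporting all the sets along a generic homomorphism $\Z^d\to\Z$. First I would note that only finitely many elements of $G$ occur among $A_1,\dots,A_k$, so after replacing $G$ by the subgroup they generate we may assume $G$ is finitely generated; since a finitely generated torsion-free abelian group is isomorphic to $\Z^d$ for some $d\geq 0$, we may as well assume $A_i\subset\Z^d$. All sumsets in sight ($S$, the $S_i$, and any $S_i'$, $S'$ built from subsets $A_i'\subset A_i$) then lie in $\Z^d$, and moreover $S_i'\subseteq S$ and hence $S'\subseteq S$, because $A_i'\subseteq A_i$.

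Next I would choose a group homomorphism $\phi\colon\Z^d\to\Z$, necessarily of the form $\phi(x)=\langle v,x\rangle$ with $v\in\Z^d$, that is injective on the finite set
\[
F=\Bigl(\bigcup_{i=1}^{k}A_i\Bigr)\cup S\cup\bigcup_{i=1}^{k}S_i .
\]
For a fixed pair of distinct points $x,y\in F$, the requirement $\phi(x)\neq\phi(y)$ only forbids $v$ from the proper subgroup $\{w\in\Z^d:\langle w,x-y\rangle=0\}$; since $\mathbb{Q}^d$ is not the union of finitely many proper subspaces and there are only finitely many such pairs, an admissible $v$, and hence $\phi$, exists. For each $i$ I would then let $A_i'\subseteq A_i$ consist of the (one or two) elements of $A_i$ at which $\phi|_{A_i}$ attains its minimum, respectively its maximum; injectivity of $\phi$ on $A_i\subseteq F$ makes $A_i'$ have at most two elements, and $\phi(A_i')=\{\min\phi(A_i),\max\phi(A_i)\}$, which is exactly the distinguished two- (or one-) element set attached to $\phi(A_i)$ in the formulation of Theorem~\ref{superadd}.

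Now I would apply Theorem~\ref{superadd} to the integer sets $\phi(A_1),\dots,\phi(A_k)$. Because $\phi$ is a homomorphism it satisfies $\phi(X+Y)=\phi(X)+\phi(Y)$ and $\phi\bigl(\bigcup_i X_i\bigr)=\bigcup_i\phi(X_i)$, so it carries $S$, $S_i$, $S_i'$, $S'$ precisely to the sets playing those roles for $\phi(A_1),\dots,\phi(A_k)$ in Theorem~\ref{superadd} (using the previous paragraph for the $A_i'$). Hence \eqref{also} gives
\[
|\phi(S)|\;\geq\;|\phi(S')|\;\geq\;\frac{1}{k-1}\sum_{i=1}^{k}|\phi(S_i)|-\frac{1}{k-1}.
\]
Finally, since $\phi$ is injective on $F$ (which contains $S$, hence the $S_i'$ and $S'$, and each $S_i$) we have $|\phi(S)|=|S|$, $|\phi(S')|=|S'|$ and $|\phi(S_i)|=|S_i|$ for every $i$; substituting these equalities into the displayed chain yields exactly \eqref{also-tf}.

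Everything past the choice of $\phi$ is bookkeeping with the homomorphism property, so the single genuine step is producing $\phi$: it must be simultaneously injective on the whole relevant finite set and compatible with the choice of the extremal sets $A_i'$. The second condition comes for free once $\phi|_{A_i}$ is injective, since then the minimum and maximum of $\phi(A_i)$ are attained at unique points of $A_i$; so both requirements reduce to injectivity of $\phi$ on the one finite set $F$, which the standard fact that a vector space over an infinite field is not a finite union of proper subspaces supplies. I do not anticipate any obstacle beyond this.
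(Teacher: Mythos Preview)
Your proof is correct and follows essentially the same route as the paper: reduce to $\Z^d$ via the structure theorem for finitely generated torsion-free abelian groups, then push everything to $\Z$ along a homomorphism that is injective on the relevant finite set, and pull the extremal two-element subsets back. The only cosmetic difference is that the paper writes down the explicit map $(z_1,\dots,z_d)\mapsto m z_1+m^2 z_2+\dots+m^d z_d$ for large $m$, whereas you invoke the generic linear functional argument; both are standard and yield the same conclusion.
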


Another natural way of generalizing Theorem \ref{submult} is to
restrict the summation of elements to a prescribed addition graph.
A possible meaning of this in the case $k=3$ (and identical sets)
could read as follows. We consider a graph $G$ on our set $A$; on
the right hand side of the proposed inequality we take the number
of different sums of connected pairs; on the left hand side we
take the number of different sums of those triplets where each
pair is connected. However, the resulting
 inequality, $|A+\pg  A+\pg  A|^2\leq |A+\pg  A|^3$, can fail
spectacularly. Take  $A=[1,n]$, let $S\subset (2n/3, 4n/3)$ be a set of even integers
 and connect two elements of $A$ if their sum is in $S$. Then for every
  $s_1, s_2, s_3\in S$ we can find $a_1, a_2, a_3\in A$, $a_1=(-s_1+s_2+s_3)/2$, etc., whose pairwise sums give these
$s_i$'s. Also,  $a_1+ a_2+ a_3 = (s_1+ s_2+ s_3)/2 $. Therefore,
if $S$ is such that all the triple sums $s_1+s_2+s_3$ are
distinct, then the above mapping $(s_1, s_2, s_3)\mapsto
(a_1,a_2,a_3)$ is injective, and the left side of the inequality
will be $\binom{|S|}{3}^2\approx \frac{1}{6}|S|^6$, much larger
than the right hand side, which is $S^3$.

 It would
be interesting to say something when the graphs are sufficiently dense.

     However, we will prove a similar statement in the case when only one pair
of summands is restricted.

\begin{Th}\label{3sum}
Let $A,B_1,B_2$ be finite sets in a commutative group, and $S\subset B_1+
B_2$. Then
\begin{equation} \label{restsum}
|S+A|^2\leq |S||A+B_1||A+B_2|
\end{equation}
\end{Th}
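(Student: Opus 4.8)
The plan is to use a Plünnecke–Ruzsa type graph-theoretic argument, treating the addition of $A$ as the "layers" of a commutative graph and the restriction to $S$ as a constraint living in the base. First I would recall the Plünnecke inequality in its Ruzsa-graph formulation: if $\Gamma$ is a commutative directed graph on layers $V_0, V_1, \dots, V_k$ with magnification ratios $D_i$, then the $D_i^{1/i}$ are decreasing; equivalently, for any $Z \subseteq V_0$ one has a subset $Z' \subseteq Z$ with $|\im_k(Z')| \le (|\im_j(Z')|/|Z'|)^{k/j} |Z'|$. The point is to build a two-layer graph whose base is $S$, whose first layer records "add an element of $A$", but where the edges are controlled so that the first image is contained in $A+B_1$ and the second image in something of size $|A+B_2|$.

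Concretely, here is the construction I would try. For each $s \in S$ fix a representation $s = b_1(s) + b_2(s)$ with $b_1(s)\in B_1$, $b_2(s)\in B_2$. Form a graph on three layers: $V_0 = S$, $V_1 = A + B_1$, $V_2 = A + B_1 + B_2 \supseteq A + S$ is too big, so instead I would put $V_2$ to be a copy of $A+B_2$ and define the second-layer edges using $b_1(s)$: from $s$ we go to $a + b_1(s) \in A+B_1$ in the first layer (as $a$ ranges over $A$), and then the second edge subtracts $b_1(s)$ and adds $b_2(s)$, landing on $a+b_2(s)\in A+B_2$. The composite edge from $s\in S$ then reaches $a + b_1(s) + \bigl(b_2(s)-b_1(s)\bigr)\cdot(\text{wrong})$ — this does not quite close up, so the correct bookkeeping is: layer-one edge $s \mapsto (s,a)$ with first coordinate image point $a+b_1(s)$, layer-two edge $(s,a)\mapsto$ image point $a+b_2(s)$, and one checks the composite graph is commutative because the "transition" only depends on the pair of image coordinates, not on $s$ itself (this is where one must be slightly careful, and it is the main obstacle). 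The magnification from $V_0$ to $V_1$ is then at most $|A+B_1|/|S|$ in the sense that $\im_1(S) \subseteq A+B_1$, and from $V_0$ to $V_2$ the image $\im_2(S)$ is $\{a+b_2(s) : a\in A, s\in S\}\subseteq A+B_2$. Finally $S+A$ embeds into the graph: the map $(s,a)\mapsto s+a$ is recovered from the two image coordinates since $(a+b_1(s)) + b_2(s) = s+a$ and $b_2(s)$ is determined by $a+b_2(s)$ together with $a = (a+b_1(s)) - b_1(s)$...

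To avoid the circularity above I would actually set things up as a \emph{bipartite} two-step Plünnecke graph and apply the tensor-power (Plünnecke) trick directly: consider the graph $\Gamma$ with $V_0 = S$, and define $h:V_0\to \mathcal P(A+B_1)$ by $h(s) = A + b_1(s)$ and $g:V_0 \to \mathcal P(A+B_2)$ by $g(s) = A + b_2(s)$; then $|h(s)| = |g(s)| = |A|$ for all $s$, $\bigcup_s h(s)\subseteq A+B_1$, $\bigcup_s g(s)\subseteq A+B_2$, and the key identity is that $h(s)$ and $g(s)$ together determine $s+a$: indeed if $a+b_1(s) = x$ and $a'+b_2(s)=y$ with the \emph{same} $s$, the diagonal elements $a=a'$ give $x+b_2(s) = a+b_1(s)+b_2(s) = a+s$ and these are exactly the elements of $S+A$. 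Counting with the Cauchy–Schwarz / tensor-power inequality $|S+A|^2 \le |S|\cdot\bigl(\text{number of triples }(s,x,y)\text{ with }x\in h(s),y\in g(s), x-y\text{ fixed type}\bigr)$ and bounding the number of such configurations by $|A+B_1||A+B_2|$ yields \eqref{restsum}. I expect the genuinely delicate point to be verifying the commutativity (Plünnecke) property of the constructed graph so that the $D_i^{1/i}$-monotonicity applies — everything else is the standard Plünnecke counting once the graph is correctly described, and I would phrase the final step via the self-tensoring argument of Ruzsa to keep it self-contained.
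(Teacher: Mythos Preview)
Your proposal has a genuine gap, and you have correctly identified where it is: the commutativity of the would-be Pl\"unnecke graph. Unfortunately this is not a detail to be checked later but the whole difficulty, and the graph you describe is \emph{not} commutative. The edges out of $s\in V_0$ are ``add $b_1(s)$ then an element of $A$'', and the second-layer edges you want are ``subtract $b_1(s)$, add $b_2(s)$''. Both depend on the base vertex $s$ through the fixed representation $s=b_1(s)+b_2(s)$. Once you are at a point $x\in A+B_1$ in the middle layer, there is no way to recover which $s$ you came from, so either you forget $s$ (and then the second layer sees all of $B_1+B_2$, losing the restriction to $S$) or you carry $s$ along as an extra coordinate (and then $|V_1|$ can be as large as $|S|\cdot|A+B_1|$, which is useless). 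Your Cauchy--Schwarz sketch at the end does not escape this: the ``number of triples $(s,x,y)$'' you allude to is not bounded by $|A+B_1||A+B_2|$ without exactly the injectivity you are unable to establish.

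The paper's proof avoids this by reversing the roles: it puts $A$, not $S$, at the base of the Pl\"unnecke argument. One applies the different-summands Pl\"unnecke inequality to $A,B_1,B_2$ (where the graph \emph{is} commutative, since edges are simply ``add an element of $B_i$'') and iterates it to obtain, for any threshold $t<|A|$, a subset $X\subset A$ with $|X|>t$ and an explicit upper bound on $|X+B_1+B_2|$. Since $S\subset B_1+B_2$, this controls $|S+X|$. The complement is handled trivially by $|S+(A\setminus X)|\le |S|\,|A\setminus X|$. Optimising the choice of $t$ gives $|S+A|\le 2\sqrt{|S|\,|A+B_1|\,|A+B_2|}$, and the factor $2$ is then removed by the tensor-power trick (the only place self-tensoring enters). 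The key idea you are missing is this switch of base set together with the split $A=X\cup(A\setminus X)$ and the optimisation over $|X|$.
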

The analogous statement for more than three sets remains an open problem.
\begin{?}
Let $A,B_1, \dots B_k$ be finite sets of integers, and $S\subset
B_1+ \dots +B_k$. Is it true that
\begin{equation} \label{altrestsum}
|S+A|^{k}\leq |S|\prod_{i=1}^k|A+B_{1} + \dots  + B_{i-1} + B_{i+1}+ \dots  + B_{k} |\ ?
\end{equation}
\end{?}

     \section{Proof of superadditivity}\label{sec1}
In this section we prove Theorems \ref{superadd} and
\ref{superadd-tf}.

     \begin{proof}[Proof of Theorem \ref{superadd}.]

Both sides of the inequality are invariant under translation,
therefore we can assume that the smallest element of each $A_i$ is
0. Also, let us denote the largest element of $A_i$ by $a_i$. Then
$S$ is a subset of the interval $[0, a_1+a_2+\dots + a_k].$

Make $k-1$ copies of the set $S$. In the first copy mark the
elements of \\
$0+A_1+\dots +A_{k-1}$. They all belong to the interval $[0,
a_1+\dots +a_{k-1}].$ In the remaining interval $(a_1+\dots
+a_{k-1}, a_1+\dots +a_{k-1}+a_k]$ of the first copy of $S$ mark
the elements of $a_{k-1}+A_1+A_2+\dots +A_{k-2}+A_k$ which fall in
there. These elements correspond exactly to the elements of
$A_1+A_2+\dots +A_{k-2}+A_k$ which are larger than $a_1+\dots
+a_{k-2}$. We denote this latter set by $(A_1+A_2+\dots
+A_{k-2}+A_k)_{>a_1+\dots +a_{k-2}}$.

Then, for $2\leq i \leq k-2,$ in the $i$th copy of $S$ mark the
elements of \\
$0+(A_1+A_2+\dots +A_{k-i}+A_{k-i+2}+\dots +A_k)_{\leq a_1+\dots
+a_{k-i}}$, and the elements of\\
 $a_{k-i}+(A_1+\dots
+A_{k-i-1}+A_{k-i+1}+\dots +A_k)_{>a_1+\dots a_{k-i-1}}.$ Finally,
in the $k-1$st copy of $S$ mark the elements of $0+(A_1+A_3+\dots
+A_k)_{\leq a_1}$ and the elements of $a_1+(A_2+\dots
+A_k)_{>a_1}.$

Note that all marked elements belong to $S'$. Also, for $1\leq i\leq
k-2$ the number of marked elements in the second section of the
$i$th copy and the first section of the $i+1$st copy of $S$ is
exactly $|A_1+\dots +A_{k-i-1}+A_{k-i+1}+\dots +A_k|$.
Furthermore, the number of marked elements in the first section of
the first copy is $|A_1+\dots +A_{k-1}|$, while in the second
section of the last copy it is $|A_2+A_3+\dots +A_k|-1$. Let $M$
denote the set of marked elements. Then, by construction,
\begin{equation} \label{m}(k-1)|S|\geq (k-1)|S'|\geq |M|= \sum _{i=1}^k \left|S_i
\right|-1\end{equation} and we are done.
     \end{proof}

          \begin{proof}[Proof of Theorem \ref{superadd-tf}.]

This is a standard reduction argument to the case of integers. Let
$H$ denote the subgroup generated by the elements of $\cup_{i=1}^k
A_i.$ As a finitely generated torsion-free group $H$ is isomorphic
to $\Z^d$ for some $d$, therefore we can assume without loss of
generality that $A_i\subset \Z^d.$ Then, for a large enough
integer $m$ the homomorphism $\phi_m : \Z^d\to \Z$ defined by
$(z_1, z_2, \dots z_d)\mapsto mz_1+m^2z_2+\dots m^dz_d$ preserves
the additive identities of all elements of sumsets involved in the
desired inequality (this means that $\phi_m$ is one-to-one
restricted to these elements). Finally, if $B_i$ denotes the image
of $A_i$ under $\phi_m$ then the desired two-element subsets
$A_i'$ can be chosen as $A_i'=\phi_m^{-1}(B_i')$.
     \end{proof}

     \section{Proof of submultiplicativity}\label{sec2}

In this section we prove Theorem \ref{submult}. We begin with a
lemma on the size of projections.

     \begin{Lemma}  \label{vetulet}
     Let $d\geq 2$ be an integer, $X_1, \dots , X_d$ arbitrary sets,
     \[   B \subset  X_1 \times  \dots  \times  X_d  \]
     be a finite subset of their Cartesian product. Let
     \[   B_i \subset  X_1 \times  \dots  \times  X_{i-1} \times  X_{i+1} \times  \dots  \times  X_d  \]
     be the corresponding ``projection'' of $B$:
     \[   B_i = \{ (x_1, \dots , x_{i-1}, x_{i+1}, \dots , x_d): \exists x\in
     X_i \
\text{such \ that} \  (x_1, \dots , x_{i-1}, x,  x_{i+1}, \dots ,
x_d)\in B      \}. \]
     We have
     \begin{equation} \label{vetuletbecsles}
     \left|B \right|^{d-1} \leq  \prod _{i=1}^d \left|B_i \right|.
     \end{equation}
     \end{Lemma}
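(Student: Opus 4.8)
This is the classical discrete Loomis–Whitney inequality, and the plan is to prove it by induction on the dimension $d$. The base case $d=2$ is immediate: if $B\subset X_1\times X_2$ then $B_1$ and $B_2$ are its two one-dimensional projections, and $|B|\le |B_1||B_2|$ simply because every element of $B$ is determined by its pair of coordinates, each coordinate ranging over (a subset of) the respective projection. So the real content is the inductive step.

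For the inductive step, the standard approach is to slice $B$ along the last coordinate. For each fixed value $t\in X_d$ that actually occurs, let $B^{(t)}=\{(x_1,\dots,x_{d-1}): (x_1,\dots,x_{d-1},t)\in B\}\subset X_1\times\cdots\times X_{d-1}$ be the corresponding slice, and let $C^{(t)}_i$ for $1\le i\le d-1$ be its $i$-th projection (dropping coordinate $i$). Note two containments: $C^{(t)}_i\subset B_i$ for every $t$ (since removing a further constraint only enlarges a projection, and the slice of $B_i$ at height $t$ is contained in $B_i$... more carefully, $C^{(t)}_i$ is a subset of the slice of $B_i$ at the last coordinate $t$, hence of $B_i$), and $\bigcup_t B^{(t)}$ projects onto $B_d$, so $|B_d|$ controls $\sum_t$ in an averaged sense. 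The key inequality is then obtained by applying the inductive hypothesis to each slice $B^{(t)}$ in dimension $d-1$, giving $|B^{(t)}|^{d-2}\le \prod_{i=1}^{d-1}|C^{(t)}_i|$, and then combining over $t$ using $|B|=\sum_t |B^{(t)}|$, the bound $|C^{(t)}_i|\le |B_i|$, and Hölder's inequality to handle the sum of products.

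The technical heart — and the step I expect to be the main obstacle — is the bookkeeping in the combination step: one wants to conclude $|B|^{d-1}\le \prod_{i=1}^d |B_i|$ from $\sum_t |B^{(t)}|^{d-2}\le \sum_t \prod_{i=1}^{d-1}|C^{(t)}_i|$ together with the side information $\sum_t 1 = |B_d|$ (the number of occupied slices) and $|C^{(t)}_i|\le |B_i|$. The clean way: write $|B^{(t)}|=|B^{(t)}|^{1/(d-1)}\cdot|B^{(t)}|^{(d-2)/(d-1)}$, sum over $t$, and apply Hölder with exponents $d-1$ and $(d-1)/(d-2)$ to get $|B|\le \big(\sum_t|B^{(t)}|\big)^{0}\cdots$ — more precisely $|B|=\sum_t|B^{(t)}|\le\big(\sum_t 1\big)^{1/(d-1)}\big(\sum_t |B^{(t)}|^{(d-2)/(d-3)}\big)^{\dots}$; the exponents must be chosen so that after raising to the power $d-1$ and inserting the inductive bound, each $|B_i|$ appears exactly once. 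One checks that Hölder with the weights $\big(\sum_t 1\big)^{1/(d-1)}$ and $\big(\sum_t \prod_{i=1}^{d-1}|C^{(t)}_i|^{1/(d-2)}\big)^{(d-2)/(d-1)}$, followed by a second application of generalized Hölder (with $d-1$ factors) inside the second sum, distributes things correctly: the first factor yields $|B_d|^{1/(d-1)}$, and the second yields $\prod_{i=1}^{d-1}|B_i|^{1/(d-1)}$. Raising to the $(d-1)$-st power gives \eqref{vetuletbecsles}. The bound $|C^{(t)}_i|\le |B_i|$ is what lets us replace the $t$-dependent projections by the global ones after the Hölder step; verifying that containment carefully from the definitions is the one place where it is easy to slip.
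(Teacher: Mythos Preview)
Your strategy---induction on $d$, slice along one coordinate, apply the inductive hypothesis to each slice, and recombine with H\"older---is exactly the paper's approach (they slice along the first coordinate rather than the last, which is immaterial). However, your bookkeeping in the combination step contains a genuine error, not just imprecision.

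You claim that $(\sum_t 1)^{1/(d-1)}$ yields $|B_d|^{1/(d-1)}$. But $\sum_t 1$ counts the number of distinct values of the \emph{last} coordinate occurring in $B$, whereas $B_d$ is the projection that \emph{drops} the last coordinate; these are unrelated quantities, and there is no inequality in the direction you need (take $B=\{1\}\times\{1,\dots,n\}$ for $d=2$: there are $n$ slices but $|B_d|=1$). Likewise, the containment $C^{(t)}_i\subset B_i$ (as a slice), while true, is not the fact that closes the argument.

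The correct assignments are the other way around. Since $B^{(t)}\subset B_d$ for every $t$, one has $|B^{(t)}|\le|B_d|$; and since the sets $C^{(t)}_i\times\{t\}$ are pairwise disjoint with union $B_i$, one has $\sum_t |C^{(t)}_i|=|B_i|$ for each $i<d$. With these in hand the computation is clean: write
\[
|B^{(t)}| \;=\; |B^{(t)}|^{1/(d-1)}\cdot |B^{(t)}|^{(d-2)/(d-1)}
\;\le\; |B_d|^{1/(d-1)}\prod_{i=1}^{d-1}|C^{(t)}_i|^{1/(d-1)}
\]
(the first factor from $|B^{(t)}|\le|B_d|$, the second from the inductive hypothesis), sum over $t$, and apply H\"older with $d-1$ equal exponents to obtain
\[
|B|\;\le\;|B_d|^{1/(d-1)}\prod_{i=1}^{d-1}\Bigl(\sum_t|C^{(t)}_i|\Bigr)^{1/(d-1)}
\;=\;\prod_{i=1}^{d}|B_i|^{1/(d-1)}.
\]
This is precisely the paper's argument.
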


     This lemma is not new. It is essentially equivalent to an
entropy inequality of Han \cite{han78}, see also Cover--Thomas
\cite{coverthomas91}, Theorem 16.5.1. It follows from Shearer's inequality
\cite{chungetal86} or from Bollob\'as and Thomason's Box Theorem
\cite{bollobasthomason95}. We include a proof for fun.

     \begin{proof} 

We prove this lemma by induction on $d$. For $d=2$ the
statement is obvious. Assume now that the statement holds for
$d-1$, and consider the case $d$.

Make a list $\{b_1,b_2,\dots,b_t\}$ of those elements of $X_1$
which appear as a first coordinate of some element in $B$.
Partition the set $B$ according to these first coordinates as
\begin{equation} \label{b}
 B=B(b_1)\cup B(b_2)\cup\dots\cup B(b_t),
\end{equation}
where
\begin{equation} \label{bbi}
 B(b_i)=\{(b_i,x_2,x_3,\dots,x_d)=b:\ b\in B\}.
\end{equation}
By the inductive hypothesis we have $\ab{B(b_i)}^{d-2}\leq
\ab{B(b_i)_2}\cdots \ab{B(b_i)_{d}},$ that is,
\begin{equation} \label{absbbi}\ab{B(b_i)}^{\frac{d-2}{d-1}}\leq \zj{\ab{B(b_i)_{2}}\cdots
\ab{B(b_i)_{d}}}^{\frac{1}{d-1}}.\end{equation} It is also clear
that $\ab{B(b_i)}\leq \ab{B_{1}},$ and hence
\begin{equation} \label{bbiuj}
\ab{B(b_i)}\leq \zj{\ab{B(b_i)_{2}}\cdots
\ab{B(b_i)_{d}}}^{\frac{1}{d-1}} \ab{B_{1}}^{\frac{1}{d-1}}.
\end{equation}
Using this and H\"older's inequality we obtain
\begin{eqnarray}\label{abb}
\ab{B}= \sum_{i=1}^{t} \ab{B(b_i)}\leq
\ab{B_{1}}^{\frac{1}{d-1}}\sum_{i=1}^{t} \zj{\ab{B(b_i)_{2}}\cdots
\ab{B(b_i)_{d}}}^{\frac{1}{d-1}}\leq \\
 \leq \ab{B_{1}}^{\frac{1}{d-1}}
\prod_{j=2}^{d}
\zj{\sum_{i=1}^{t}\ab{B(b_i)_{j}}}^{\frac{1}{d-1}}=
\prod_{j=1}^{d}\ab{B_{j}}^{\frac{1}{d-1}},
\end{eqnarray} which
proves the statement.
     \end{proof}

We now turn to the  proof of Theorem \ref{submult}.

     \begin{proof} 

Let us list the elements of the sets $A_1, A_2,\dots, A_k$ in some
order:
\[A_1=\{c_{11},c_{12},\dots,c_{1t_1}\},\]
\[A_2=\{c_{21},c_{22},\dots,c_{2t_2}\},\]
\[\vdots\]
\[A_k=\{c_{k1},c_{k2},\dots,c_{kt_k}\}.\]
For each $s\in S$ let us consider the decomposition
\begin{equation} \label{sdecomp}
 s=c_{1i_1}+c_{2i_2}+\dots+c_{ki_k},
\end{equation}
where the finite sequence $(i_1,i_2,\dots,i_k),$ composed of the
(second) indices of $c_{ji_j}$, is minimal in lexicographical
order.  Let us define a function $f$ from $S$ to the Cartesian
product $A_1\times A_2\times \cdots \times A_k$, by
\begin{equation} \label{fdef}
f(s)=(c_{1i_1},c_{2i_2},\dots,c_{ki_k})\in A_1\times\cdots\times
A_k.
\end{equation}
This function is well-defined, and it maps the set $S$ to a set
$B\subset A_1\times\cdots\times A_k$ such that
$\ab{B}=\ab{A_1+\dots+A_k}.$ Applying Lemma \ref{vetulet} to the
set $B$ we get
\begin{equation} \label{bappl}
\ab{B}^{k-1}\le\ab{B_{1}}\ab{B_{2}}\cdots \ab{B_{k}}.
\end{equation}
Therefore, it is sufficient to show that
\begin{equation} \label{bjaj}\ab{B_{j}}\leq
\ab{A_1+A_2+\dots+A_{j-1}+A_{j+1}+\dots+A_k}.\end{equation} This
inequality, however, follows easily from the fact that sum of the
coordinates is distinct for each element in $B_{j}$. Indeed,
assume that there exist two elements $z\ne z'\in B_{j}$ such that
\[z=(c_{1i_1},c_{2i_2},\dots,c_{j-1i_{j-1}},c_{j+1i_{j+1}},\dots c_{ki_k}),\]
\[z'=(c_{1i_1'},c_{2i_2'},\dots,c_{j-1i_{j-1}'},c_{j+1i_{j+1}'},\dots
,c_{ki_k'}), \]
 and
\[
c_{1i_1}+c_{2i_2}+\dots+c_{ki_k}=c_{1i_1'}+c_{2i_2'}+\dots+c_{ki_k'}.
\]
We may assume that
\[
(i_1,i_2,\dots,i_{j-1},i_{j+1},\dots,i_k)
<(i_1',i_2',\dots,i_{j-1}',i_{j+1}',\dots,i_k').
\] in lexicographical order.

Now, $z'\in B_{j}$ therefore there exists an element $d\in A_j$
and $u\in S$, such that
\[
u=c_{1i_1'}+c_{2i_2'}+\dots+c_{j-1i_{j-1}'}+d+c_{j+1i_{j+1}'}+\dots+c_{ki_k'},
\]
and
\[
f(u)=(c_{1i_1'},c_{2i_2'},\dots,c_{j-1i_{j-1}'},d,c_{j+1i_{j+1}'},
\dots ,c_{ki_k'}) \in B.
\]
Note that
\[
u=c_{1i_1}+c_{2i_2}+\dots+c_{j-1i_{j-1}}+d+c_{j+1i_{j+1}}+\dots+c_{ki_k},
\]
also holds. However, with $d=c_{ji_j}$ we have
\[
(i_1,i_2,\dots,i_{j-1},i_j,i_{j+1},\dots,i_k) <
(i_1',i_2',\dots,i_{j-1}',i_j,i_{j+1}',\dots,i_k').
\]
in lexicographical order, therefore the definition of $f$ implies
that \\
$f(u)\ne (c_{1i_1'},c_{2i_2'},\dots ,
c_{j-1i_{j-1}'},d,c_{j+1i_{j+1}'},\dots ,c_{ki_k'})$, a
contradiction.
     \end{proof}

     A similar method is used by Alon \cite{alon2003} for the particular case
when we have sets instead of numbers, the operation is intersection, and the
sets $A_i$ are identical. As Alon observes, the same approach works for general
semigroups where the elements are idempotent.

\section{Restricted sums and Pl\"unnecke-type results}

     Pl\"unnecke \cite{plunnecke70} developed a graph-theoretic method to
estimate the density of sumsets $A+B$, where $A$ has a
positive density and $B$ is a basis. The third author published a simplified
version of his
proof \cite{r89e,r90a}. Accounts of this method can be found in
  Malouf
\cite{malouf95}, Nathanson \cite{nathanson96}, Tao and Vu \cite{taovu06}.

     The simplest instance of Pl\"unnecke's inequality for finite sets goes as
follows.

     \begin{Th} \label{plunnalk}
Let $i<k$ be integers, $A$, $B$ sets in a commutative group and write
$|A|=m$, $|A+iB|=\alpha m$. There is an $X \subset  A$, $X  \ne   \emptyset $ such that
     \begin{equation} \label{pl1}
      |X+kB| \leq \alpha ^{k/i} |X|.
     \end{equation}
     \end{Th}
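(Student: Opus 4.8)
The plan is to deduce Theorem \ref{plunnalk} from Pl\"unnecke's graph-theoretic inequality applied to a suitably chosen layered graph. First I would build the addition graph $G$ on $k+1$ levels $V_0, V_1, \dots, V_k$, where $V_0 = A$, $V_j = A + jB$ for $1 \le j \le k$, and a vertex $x$ on level $j$ is joined to $x+b$ on level $j+1$ for each $b \in B$. This is a commutative (hence Pl\"unnecke) graph in the sense of Pl\"unnecke's theory: the magnification ratios $D_j$ are multiplicative-submultiplicative along the levels, and one has the crucial concavity/monotonicity of the sequence $D_j^{1/j}$, i.e. $D_k^{1/k} \le D_i^{1/i}$ for $i < k$. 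The hypothesis $|A+iB| = \alpha m$ gives $D_i \le \alpha$ (the $i$-th magnification ratio of the whole of $A$ is at most $|A+iB|/|A| = \alpha$), so $D_k \le D_i^{k/i} \le \alpha^{k/i}$.

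Next I would invoke the defining property of the magnification ratio $D_k$: it is the infimum over nonempty $Z \subset V_0$ of $|\operatorname{im}_k(Z)|/|Z|$, where $\operatorname{im}_k(Z)$ is the image of $Z$ on level $k$ along directed paths. Since the infimum of a finite set of positive rationals is attained, there is a nonempty $X \subset A$ with $|\operatorname{im}_k(X)|/|X| = D_k \le \alpha^{k/i}$. Finally I would observe that $\operatorname{im}_k(X) \subset X + kB$ — indeed, following $k$ edges from a vertex $x \in X$ lands at some $x + b_1 + \dots + b_k \in X + kB$ — so $|X + kB| \ge |\operatorname{im}_k(X)|$ is the wrong direction; rather one wants $|X+kB|$ bounded \emph{above}. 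Here I must be slightly careful: in the layered graph just described, $\operatorname{im}_k(X)$ is exactly $\{x + b_1 + \dots + b_k : x \in X, b_j \in B\} = X + kB$, because every sum of $k$ elements of $B$ arises as a length-$k$ path. Hence $|X+kB| = |\operatorname{im}_k(X)| = D_k|X| \le \alpha^{k/i}|X|$, which is exactly \eqref{pl1}.

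The one point that needs genuine care — and which I expect to be the main obstacle to a fully self-contained writeup — is the verification that the layered graph above is a Pl\"unnecke graph (i.e.\ that it satisfies the commutativity condition needed for the inequality $D_k^{1/k} \le D_i^{1/i}$), together with the precise statement that $D_i$ computed for all of $A$ is at most $\alpha = |A+iB|/|A|$. Both are standard: the commutativity of the group translates into Pl\"unnecke's combinatorial commutativity hypothesis on the bipartite maps between consecutive levels, and $D_i \le |A+iB|/|A|$ holds because $A$ itself is an admissible competitor in the infimum defining $D_i$. I would cite \cite{plunnecke70} or the simplified treatments in \cite{r89e, r90a, nathanson96, taovu06} for the inequality $D_k^{1/k}\le D_i^{1/i}$ and for the attainment of $D_k$ at some nonempty subset, and keep the rest of the argument as the short bookkeeping above.
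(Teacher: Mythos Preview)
The paper does not actually prove Theorem \ref{plunnalk}; it quotes it as ``the simplest instance of Pl\"unnecke's inequality for finite sets'' and points to \cite{plunnecke70,r89e,r90a,malouf95,nathanson96,taovu06} for the graph-theoretic machinery. Your sketch is precisely the standard deduction carried out in those references---form the addition graph on levels $A, A+B,\dots,A+kB$, use $D_k^{1/k}\le D_i^{1/i}$ together with $D_i\le |A+iB|/|A|=\alpha$, and pick a nonempty $X\subset A$ attaining $D_k$---so there is nothing to contrast; the only cosmetic blemish is the mid-paragraph hesitation about $\operatorname{im}_k(X)$ versus $X+kB$, which you correctly resolve by noting they coincide.
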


     As Lev observed, this is sufficient to deduce the monotonicity of $\left|
kA\right|^{1/k}$. Indeed, in the above result replace $B$ by $A$ and $A$ by
$\{0\}$. Then $\alpha = \left|iA \right|$, the only possibility is $X=\{0\}$ and \eqref
{pl1} reduces to $\left|kA \right|\leq \left|iA \right|^{k/i}$.

     The application to different summands is less straightforward. We start
from the following result from \cite{r89e}, which extends the case $i=1$ of
Theorem \ref{plunnalk} to the addition of different sets.

     \begin{Th} \label{pldiff}
Let $A$, $B_1, \dots  , B_h$ be finite sets in a commutative group and write
$|A|=m$,\\
$|A+B_i|=\alpha _i m$, for $1\leq i\leq h$. There exists an $X
\subset A$, $X \ne \emptyset $ such that
\begin{equation}\label{difplu}
|X+B_1+ \dots +B_h| \leq \alpha _1 \alpha _2 \dots  \alpha _h |X| .
\end{equation}
     \end{Th}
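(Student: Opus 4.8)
The plan is to recast \eqref{difplu} as a statement about the magnification ratio of a layered directed graph and to establish it by Pl\"unnecke's graph-theoretic method, of which Theorem~\ref{plunnalk} is the single-summand prototype. To the data $A,B_1,\dots,B_h$ I would associate a layered directed graph $G$ with levels $V_0,\dots,V_h$, where $V_0=A$, the top level carries the full sumset $A+B_1+\dots+B_h$, every edge runs from level $j-1$ to level $j$, and the edges encode addition by the summands (the precise construction is the delicate point, discussed below). For $\emptyset\ne Z\subseteq V_0$ let $\im(Z)\subseteq V_h$ denote the set of endpoints of directed paths starting in $Z$, and set
\[ D_h=\min_{\emptyset\ne Z\subseteq V_0}\frac{|\im(Z)|}{|Z|}. \]
The set $X$ promised by the theorem will simply be a minimiser in this definition: for such an $X$ one reads the sumset off the image, $|X+B_1+\dots+B_h|\le|\im(X)|=D_h\,|X|$, so the entire statement reduces to the single inequality $D_h\le\alpha_1\cdots\alpha_h$.

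The engine is the fundamental inequality for commutative (Pl\"unnecke) graphs. First I would verify that $G$ is a commutative graph: this is the structural input that uses the group law, since translation by a fixed element carries out-neighbourhoods to out-neighbourhoods and thereby lets one lift and push down whole systems of paths. Granting this, the core combinatorial theorem controls $D_h$ by the level-to-level expansion; it is proved by combining Menger's theorem — which converts a magnification bound into a disjoint system of paths — with the tensor-power trick, raising $G$ to a high Cartesian power so that the rational magnification ratios multiply exactly in the limit and the integrality loss is defeated.

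The delicate point, and the reason a naive construction fails, is that I must obtain the \emph{product} $\alpha_1\cdots\alpha_h$ rather than a power of a single ratio. If I simply stacked the sets in sequence, taking $V_j=A+B_1+\dots+B_j$ and adding $B_j$ between levels $j-1$ and $j$, the expansion across consecutive levels would measure $|Y+B_j|/|Y|$ for subsets $Y$ of $A+B_1+\dots+B_{j-1}$, a quantity wholly unrelated to $\alpha_j=|A+B_j|/|A|$. The correct construction must instead keep the $h$ summands \emph{independent}, so that the level-$j$ expansion is genuinely governed by $\alpha_j$; I would achieve this by forming a product of the $h$ elementary depth-one graphs $A\xrightarrow{+B_i}A+B_i$, each of magnification at most $\alpha_i$, and recovering $A+B_1+\dots+B_h$ along the diagonal of the product. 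Checking that this product is again commutative, and that its top magnification is at most $\prod_i\alpha_i$, is the main obstacle; it is exactly here that commutativity of the ambient operation is indispensable, which is why the argument does not extend to the noncommutative setting of Problem~\ref{nemkommutativ}.
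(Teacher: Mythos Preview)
The paper does not give a proof of Theorem~\ref{pldiff} at all: it is quoted verbatim as a known result from \cite{r89e} and then used as a black box to derive Theorems~\ref{pldiffnagy} and~\ref{pldiffnagy2}. So there is no in-paper argument for you to be compared against.

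That said, your sketch is faithful to the method of the cited source. Ruzsa's proof in \cite{r89e} proceeds exactly through Pl\"unnecke's theory of commutative (layered) graphs, with Menger's theorem and the tensor-power trick supplying the core magnification inequality, and a product construction to pass from a single summand to $h$ distinct summands. You also correctly isolate the genuine difficulty: the sequential graph with $V_j=A+B_1+\dots+B_j$ is commutative but only yields $D_h\le D_1^h\le\alpha_1^h$, not the product $\alpha_1\cdots\alpha_h$, so one must keep the $h$ additions independent. Your ``product of the $h$ depth-one graphs $A\to A+B_i$, recovered along the diagonal'' is the right picture; the one point where the sketch is thin is how a minimising subset of the bottom layer of the \emph{product} graph (which lives in $A^h$, not in $A$) is converted into the promised $X\subset A$. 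In \cite{r89e} this is handled by the multiplicativity of magnification under graph products together with a projection/restriction step; you would need to spell that step out to turn the outline into a proof.
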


In the sequel we will need a 'large' subset
$X\subset A$, not just a non-empty one. This will be achieved by the
following result.

\begin{Th} \label{pldiffnagy}
Let $A$, $B_1, \dots  , B_h$ be finite sets in a commutative group and write
$|A|=m$,\\
 $\prod |A+B_i|= s$, $B_1 + \dots  + B_h = B$.
 Let an integer $k$ be
given, $1\leq k\leq m$.  There is an $X \subset  A$, $|X|\geq k$
such that
     \begin{equation} \label{pldn}
     \left|X+B \right|  \leq  {s\over m^h} + {s\over (m-1)^h} + \dots  + {s\over (m-k+1)^h} +(|X|-k) {s\over (m-k+1)^h}.
     \end{equation}
     \end{Th}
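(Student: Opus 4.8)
The plan is to deduce Theorem~\ref{pldiffnagy} from Theorem~\ref{pldiff} by a standard ``peeling'' argument. Theorem~\ref{pldiff} gives us, for any nonempty $A$, a nonempty $X\subset A$ with $|X+B|\le (s/m^h)|X|$, where $s=\prod|A+B_i|$ and $m=|A|$; the issue is that $|X|$ could be as small as $1$, whereas we want a subset of size at least $k$. To fix this we iterate: apply Theorem~\ref{pldiff} to get a nonempty $X_1\subset A$, remove it from $A$, apply the theorem again to $A\setminus X_1$ to get $X_2$, and so on, stopping once the accumulated sets $X_1\cup\dots\cup X_j$ first reach size $\ge k$. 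Setting $X=X_1\cup\dots\cup X_r$ at the stopping time then gives $|X|\ge k$, and the bound on $|X+B|$ is obtained by summing the estimates $|X_j+B|$ coming from each application.

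The key points to make the iteration work are the following. First, when we have already removed $X_1\cup\dots\cup X_{j-1}$ and the remaining set $A_j=A\setminus(X_1\cup\dots\cup X_{j-1})$ has $|A_j|=m_j$, we must control $\prod_i|A_j+B_i|$; since $A_j\subset A$ we have $A_j+B_i\subset A+B_i$, so $\prod_i|A_j+B_i|\le s$, and Theorem~\ref{pldiff} applied to $A_j$ yields a nonempty $X_j\subset A_j$ with
\[
|X_j+B|\le \frac{\prod_i|A_j+B_i|}{m_j^h}\,|X_j|\le \frac{s}{m_j^h}\,|X_j|.
\]
Second, before the stopping time we have $|X_1\cup\dots\cup X_{j-1}|<k$, hence $m_j=m-|X_1\cup\dots\cup X_{j-1}|\ge m-k+1$; moreover because the $X_j$ are pairwise disjoint and each is nonempty, $m_j$ strictly decreases, so in fact $m_j\ge m-j+1$ as long as $j\le k$, which is exactly the range we care about. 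Third, $X+B=\bigcup_{j=1}^r (X_j+B)$, so $|X+B|\le\sum_{j=1}^r|X_j+B|\le\sum_{j=1}^r \frac{s}{m_j^h}|X_j|$.

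It remains to bound $\sum_{j=1}^r \frac{s}{m_j^h}|X_j|$ by the right-hand side of \eqref{pldn}. Split the sum: the last block $X_r$ is the one that pushes the total size over $k$, so $|X_1|+\dots+|X_{r-1}|\le k-1$ while $|X|=|X_1|+\dots+|X_r|\ge k$. For $j<r$ each $X_j$ contributes $|X_j|$ units each weighted by $s/m_j^h$, and since the $m_j$ are distinct integers lying in the range $[m-k+2,\,m]$ and each weight $s/m_j^h$ is decreasing in $m_j$, the worst case of $\sum_{j<r}\frac{s}{m_j^h}|X_j|$ (subject to $\sum_{j<r}|X_j|\le k-1$) is at most $\frac{s}{m^h}+\frac{s}{(m-1)^h}+\dots+\frac{s}{(m-k+2)^h}$ — that is, the first $k-1$ terms of the telescoping sum, obtained by assigning one unit of mass to each of the $k-1$ largest available values of $m_j$. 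Finally, the last block $X_r$ sits at $m_r\ge m-k+1$, so $\frac{s}{m_r^h}|X_r|\le \frac{s}{(m-k+1)^h}|X_r|$, and writing $|X_r|=|X|-(|X_1|+\dots+|X_{r-1}|)$ together with $|X_1|+\dots+|X_{r-1}|\le k-1$ gives $\frac{s}{m_r^h}|X_r|\le \frac{s}{(m-k+1)^h}+(|X|-k)\frac{s}{(m-k+1)^h}$, absorbing the missing $(m-k+1)$-term and the leftover $(|X|-k)$ term of \eqref{pldn}. Adding the two pieces yields exactly \eqref{pldn}.

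The main obstacle is the bookkeeping in the last step: one has to be careful that the $m_j$ really do run through distinct integers in the claimed range (this uses disjointness and nonemptiness of the $X_j$, plus the definition of the stopping time), and that the rearrangement/convexity argument assigning the largest weights to unit masses is carried out correctly so that the $k-1$ ``full'' terms $s/m^h,\dots,s/(m-k+2)^h$ appear, with the $(m-k+1)$-term plus the surplus $(|X|-k)s/(m-k+1)^h$ coming entirely from the final block. Everything else is a direct citation of Theorem~\ref{pldiff} and the subadditivity of cardinality under unions.
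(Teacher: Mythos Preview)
Your peeling strategy is the paper's induction on $k$ unrolled, and it does prove the theorem; but the final bookkeeping as you wrote it contains a genuine error. For the last block you deduce from $|X_1|+\dots+|X_{r-1}|\le k-1$ that
\[
\frac{s}{m_r^h}\,|X_r|\ \le\ \frac{s}{(m-k+1)^h}+(|X|-k)\frac{s}{(m-k+1)^h}
\ =\ (|X|-k+1)\frac{s}{(m-k+1)^h},
\]
which (after the step $m_r\ge m-k+1$) would require $|X_r|\le |X|-k+1$. But the hypothesis gives the \emph{opposite} inequality: $|X_r|=|X|-(|X_1|+\dots+|X_{r-1}|)\ge |X|-(k-1)$. (For instance $r=2$, $|X_1|=1$, $|X_2|=5$, $k=3$ gives $|X_r|=5>|X|-k+1=4$.) Likewise, your ``worst case'' bound for $\sum_{j<r}\frac{s}{m_j^h}|X_j|$ is not justified by the constraints you list: with only ``$m_j$ distinct in $[m-k+2,m]$'' and ``$\sum|X_j|\le k-1$'' the maximum is $(k-1)\,s/(m-k+2)^h$, which is larger than $\sum_{n=m-k+2}^m s/n^h$. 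The missing ingredient is the relation $|X_j|=m_j-m_{j+1}$.

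The two pieces cannot be bounded independently; the slack in one must be played off against the other. Set $P=m-m_r=|X_1|+\dots+|X_{r-1}|\le k-1$. A Riemann-sum comparison using $|X_j|=m_j-m_{j+1}$ and $s/m_j^h\le s/n^h$ for $m_{j+1}<n\le m_j$ gives $\sum_{j<r}\frac{s}{m_j^h}|X_j|\le\sum_{n=m_r+1}^{m} s/n^h$, exactly $P$ terms. For the last block split $|X_r|=(k-P)+(|X|-k)$ and keep the sharper weight $s/m_r^h$ on the first $k-P$ units: since $s/m_r^h\le s/n^h$ for every $n\in[m-k+1,m_r]$, these $k-P$ units supply the remaining terms $\sum_{n=m-k+1}^{m_r}s/n^h$, while the leftover $|X|-k$ units take the cruder bound $s/(m-k+1)^h$. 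Summing gives exactly \eqref{pldn}. The paper sidesteps this bookkeeping by phrasing the argument as an induction on $k$: if the set $X$ from step $k$ already has $|X|\ge k+1$ one checks directly that \eqref{pldn} at level $k$ implies it at level $k+1$; if $|X|=k$ one applies Theorem~\ref{pldiff} once to $A\setminus X$ and picks up a single clean new term $s/(m-k)^h$.
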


     \begin{proof}
     We use induction on $k$. The case $k=1$ is Theorem \ref{pldiff}.

     Assume we know it for  $k$; we prove it for $k+1$. The assumption gives us
a set $X$, $|X|\geq k$ with a bound on $|X+B| $ as given by
\eqref{pldn}. We want to find a set $X'$ with $|X'|\geq k+1$ and
     \begin{equation} \label{pld1}
     \left|X'+B \right|  \leq  {s\over m^h} + {s\over (m-1)^h} + \dots  + {s\over (m-k)^h} +(|X'|-k-1) {s\over (m-k)^h}.
     \end{equation}
 If $|X|\geq k+1$, we can put $X'=X$. If $|X|=k$, we apply Theorem \ref{pldiff}
to the sets $A\setminus X$, $B_1$, \dots , $B_h$.
 This yields a
set $Y\subset A\setminus X$ such that
     $$   |Y+B| \leq  {s\over (m-k)^h} |Y|  $$
     and we put $X'=X\cup Y$.
     \end{proof}

     The following variant will be more comfortable for calculations.

     \begin{Th} \label{pldiffnagy2}
Let $A$, $B_1, \dots  , B_h$ be finite sets in a commutative group and write
$|A|=m$, \\
$\prod |A+B_i|= s$, $B_1 + \dots  + B_h = B$.
 Let a real number $t$ be given, $0\leq t<m$.
 There is an $X \subset  A$, $|X|>t$  such that
     \begin{equation} \label{pld2}
     |X+B| \leq
     {s\over h-1} \left( {1\over (m-t)^{h-1}} - {1\over m^{h-1}} \right) + (|X|-t) {s\over (m-t)^h}.
     \end{equation}
     \end{Th}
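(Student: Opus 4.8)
This is a calculus-style corollary of Theorem \ref{pldiffnagy}. The idea is that the discrete sum $\sum_{j=0}^{k-1} s/(m-j)^h$ appearing in \eqref{pldn} should be comparable to the integral $\int_0^k s\,(m-u)^{-h}\,du$, which evaluates to $\frac{s}{h-1}\bigl((m-k)^{-(h-1)} - m^{-(h-1)}\bigr)$, the main term in \eqref{pld2}.

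The plan is as follows. First I would fix the real number $t$ with $0 \le t < m$ and set $k = \lfloor t \rfloor + 1$, so that $k \le m$ (this needs the integer case $t = m-1$ to be handled, where $k = m$ is allowed) and $k > t$, hence $k - 1 \le t$. Apply Theorem \ref{pldiffnagy} with this $k$ to obtain a set $X \subset A$ with $|X| \ge k > t$ and
\[
|X+B| \le \sum_{j=0}^{k-1} \frac{s}{(m-j)^h} + (|X|-k)\frac{s}{(m-k+1)^h}.
\]
Since the function $u \mapsto (m-u)^{-h}$ is increasing on $[0,m)$, each summand satisfies $\frac{s}{(m-j)^h} \le \int_{j}^{j+1} \frac{s}{(m-u)^h}\,du$, so the sum is at most $\int_0^k \frac{s}{(m-u)^h}\,du = \frac{s}{h-1}\bigl(\frac{1}{(m-k)^{h-1}} - \frac{1}{m^{h-1}}\bigr)$. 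But this bound is in terms of $k$, not $t$, and $m - k$ could be as small as... actually $k \le m$ so $m-k \ge 0$; I must be careful when $k = m$, but then the integral up to $k=m$ diverges, which is wrong. So the integral comparison must stop at $t$, not at $k$: write $\sum_{j=0}^{k-1}\frac{s}{(m-j)^h} = \sum_{j=0}^{k-2}\frac{s}{(m-j)^h} + \frac{s}{(m-k+1)^h}$, bound the first $k-1$ terms by $\int_0^{k-1}\frac{s}{(m-u)^h}\,du \le \int_0^{t}\frac{s}{(m-u)^h}\,du = \frac{s}{h-1}\bigl(\frac{1}{(m-t)^{h-1}} - \frac{1}{m^{h-1}}\bigr)$ (using $k-1 \le t$), and absorb the leftover term $\frac{s}{(m-k+1)^h}$ together with $(|X|-k)\frac{s}{(m-k+1)^h}$ into $(|X|-k+1)\frac{s}{(m-k+1)^h}$.

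The final step is to show $(|X|-k+1)\frac{s}{(m-k+1)^h} \le (|X|-t)\frac{s}{(m-t)^h}$. Since $k - 1 \le t$ we have $m - k + 1 \ge m - t$, so $\frac{s}{(m-k+1)^h} \le \frac{s}{(m-t)^h}$; and since $k - 1 \le t < k \le |X|$ we have $0 < |X| - k + 1$ and $|X| - k + 1 \ge |X| - t$ requires $-k+1 \ge -t$, i.e. $t \ge k-1$, which holds. Combining gives exactly \eqref{pld2}. The main obstacle is purely bookkeeping: keeping the cutoff at $t$ rather than $k$ throughout so that no term blows up in the edge case $k = m$, and verifying the two elementary monotonicity inequalities $m-k+1 \ge m-t$ and $|X|-k+1 \ge |X|-t$ line up with the same hypothesis $k - 1 \le t$. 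I will also note at the outset that we may assume $h \ge 2$, since for $h = 1$ the statement is an easy direct consequence of Theorem \ref{pldiff} (or vacuous in the form stated).
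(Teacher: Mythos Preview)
Your overall strategy---apply Theorem~\ref{pldiffnagy} with $k=\lfloor t\rfloor+1$ and compare the discrete sum in \eqref{pldn} with the integral giving the main term of \eqref{pld2}---is the same as the paper's. But your final step contains a genuine error. You claim
\[
(|X|-k+1)\,\frac{s}{(m-k+1)^h}\ \le\ (|X|-t)\,\frac{s}{(m-t)^h},
\]
and justify it by noting $\frac{s}{(m-k+1)^h}\le\frac{s}{(m-t)^h}$ and $|X|-k+1\ge|X|-t$. But these two inequalities point in \emph{opposite} directions for the product, so nothing follows. In fact the displayed inequality is false in general: take $h=2$, $m=10$, $t=2.5$, $k=3$, $|X|=3$; then the left side is $s/64$ while the right side is $s/112.5$. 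More dramatically, whenever $|X|=k$ the right side tends to $0$ as $t\uparrow k$ while the left side stays positive.

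The fix is not to discard the slack between $\int_0^{k-1}$ and $\int_0^{t}$. Split the tail as
\[
(|X|-k+1)\,\frac{s}{(m-k+1)^h}=(t-k+1)\,\frac{s}{(m-k+1)^h}+(|X|-t)\,\frac{s}{(m-k+1)^h}.
\]
The second piece is $\le(|X|-t)\,\frac{s}{(m-t)^h}$ as you noted, and the first piece is $\le\int_{k-1}^{t}\frac{s}{(m-u)^h}\,du$ since the integrand is at least $\frac{s}{(m-k+1)^h}$ on $[k-1,t]$; adding this to your bound $\sum_{j=0}^{k-2}\le\int_0^{k-1}$ recovers the full $\int_0^{t}$. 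The paper packages this more cleanly: it writes the right side of \eqref{pld2} as $s\int_0^{|X|}f(x)\,dx$ with $f(x)=(m-x)^{-h}$ for $x\le t$ and $f(x)=(m-t)^{-h}$ for $x>t$, observes that $f$ is nondecreasing so the integral dominates $\sum_{j=0}^{|X|-1}f(j)$, and checks termwise that this sum dominates the right side of \eqref{pldn}.
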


     \begin{proof}
     We apply Theorem \ref{pldiffnagy} with $k=[t]+1$. The right side of \eqref
{pld2} can be written as
     $ s \int _0^{|X|} f(x) \,d x$,
     where $ f(x) = (m-x)^{-h}$ for $0\leq x\leq t$, and  $ f(x) = (m-t)^{-h}$ for
$t<x\leq |X|$. Since $f$ is increasing, the integral is
     $\geq  f(0)+f(1)+ \dots  + f(|X|-1) $.
     This exceeds the right side of \eqref{pldn} by a termwise comparison.
     \end{proof}

     \begin{proof}[Proof of Theorem \ref{3sum}.]

      Let us use the notation
$|A|=m$, $s=|A+B_1||A+B_2|$, as above.

Observe that if $|S|\leq s/m^2$ then
\begin{equation} \label{skicsi}
|S+A|\leq |S||A|=\sqrt{|S|}\sqrt{|S|} |A| \leq \sqrt{|S|}
(\sqrt{s}/m ) |A|=\sqrt{s|S|}
\end{equation}
and we are done.

If $|S|> s/m^2$ then define $t=m-\sqrt{s/|S|}$, and use Theorem
\ref{pldiffnagy2} above to find a set $X\subset A$ such that
$|X|=r>t$ and \eqref{pld2} holds with $h=2$. For such an $X$ we
have
\begin{equation} \label{s1felso}
     |S+X|\leq |B_1+B_2+X| \leq
     \frac{s}{m-t} - {s\over m} + (|X|-t) {s\over (m-t)^2}
     \end{equation} and
\begin{equation} \label{s2felso}
|S+(A\setminus X)|\leq |S||A\setminus X|.
\end{equation}
We conclude that
\begin{equation} \label{sfelso}
|S+A|\leq |S+X|+|S+(A\setminus X)|\leq  \frac{s}{m-t} - {s\over m} +
(r-t) {s\over (m-t)^2}+
\end{equation}
\begin{equation*}
 +|S|\left ( (m-t)-(r-t)\right )=
2\sqrt{s|S|}-s/m\leq 2\sqrt{s|S|}.
\end{equation*}
This inequality is nearly the required one, except for the factor
of 2. We can
dispose of this factor as follows.  Consider the sets $A'=A^k,$
$B_1'=B_1^k,$ $B_2'=B_2^k$ and $S'=S^k$ in the $k$'th direct power of the
original group.
 Applying equation \eqref{sfelso}
to $A',$ etc., we obtain
\begin{equation} \label{s'felso}
|S'+A'|\leq 2\sqrt{s'|S'|}.
\end{equation}
Since
 $|S'+A'|=|S+A|^k,$ $s'=s^k$ and
$|S'|=|S|^k$,  we get
\begin{equation} \label{sfelsouj}
|S+A|\leq 2^{1/k}\sqrt{s|S|}.
\end{equation}
Taking the limit as $k\to \infty $ we obtain the desired
inequality
\begin{equation} \label{sfelsovege}
|S+A|\leq \sqrt{s|S|}.
\end{equation}

     \end{proof}

     {\bf Acknowledgement.} The authors are grateful to Vsevolod Lev for
several relevant comments quoted in the paper, and to Katalin
Marton for discussions on entropy connection and for directing our
attention to some relevant sources.


\begin{thebibliography}{1}

\bibitem{alon2003}
N.~Alon, \emph{Problems and results in extremal combinatorics
{I.}}, Discrete
  Math. \textbf{273} (2003), 31--53.

\bibitem{bollobasthomason95}
B.~{Bollob\'as} and A.~Thomason, \emph{Projections of bodies and
hereditary
  properties of hypergraphs}, Bull. London Math. Soc. \textbf{27} (1995),
  417--424.

\bibitem{chungetal86}
F.~R.~K. Chung, R.~L. Graham, P.~Frankl, and J.~B. Shearer,
\emph{Some
  intersection theorems for ordered sets and graphs}, J. Combin. Theory Ser. A
  \textbf{43} (1986), 23--37.

\bibitem{coverthomas91}
T.~M. Cover and J.~A. Thomas, \emph{Elements of information
theory}, Wiley, New
  York -- Chichester etc., 1991.

\bibitem{r07e}
K.~Gyarmati, S.~Konyagin, and I.~Z. Ruzsa, \emph{Double and triple
sums modulo
  a prime}, CRM Workshop on combinatorial number theory, to appear.

\bibitem{han78}
T.~S. Han, \emph{Nonnegative entropy measures of multivariate
symmetric
  correlations}, Inform. Contr. \textbf{36} (1978), 133--156.

\bibitem{lev96}
V.~F. Lev, \emph{Structure theorem for multiple addition and the
{F}robenius
  problem}, J. Number Theory \textbf{58} (1996), 79--88.

\bibitem{malouf95}
J.~L. Malouf, \emph{On a theorem of {P}l{\"u}nnecke concerning the
sum of a
  basis and a set of positive density}, J. Number Theory \textbf{54}.

\bibitem{nathanson96}
M.~B. Nathanson, \emph{Additive number theory: Inverse problems
and the
  geometry of sumsets}, Springer, 1996.

\bibitem{plunnecke70}
H.~Pl{\"u}nnecke, \emph{Eine zahlentheoretische anwendung der
graphtheorie}, J.
  Reine Angew. Math. \textbf{243} (1970), 171--183.

\bibitem{r07c}
I.~Z. Ruzsa, \emph{Cardinality questions about sumsets},
Montr\'eal school on
  combinatorial number theory, to appear.

\bibitem{r89e}
\bysame, \emph{An application of graph theory to additive number
theory},
  Scientia, Ser. A \textbf{3} (1989), 97--109.

\bibitem{r90a}
\bysame, \emph{Addendum to: An application of graph theory to
additive number
  theory}, Scientia, Ser. A \textbf{4} (1990/91), 93--94.

\bibitem{taovu06}
T.~Tao and V.~H. Vu, \emph{Additive combinatorics}, Cambridge
University Press,
  Cambridge, 2006.




      \end{thebibliography}

     \end{document}